\DeclareMathOperator{\inv}{inv}
\newtheorem{theorem}{Theorem}[section]
\newtheorem{lemma}{Lemma}[section]
\theoremstyle{definition}
\newtheorem{definition}{Definition}[section]
\newtheorem{remark}{Remark}[section]
\newtheorem{example}{Example}[section]
\author{Colin Defant\affiliationmark{1,2}\thanks{This work was supported by National Science Foundation grant DMS-1358884.}}
\title[Binary Codes and Period-2 Orbits]{Binary Codes and Period-2 Orbits of Sequential Dynamical Systems}
\affiliation{
  Princeton University\\
  University of Florida}
\keywords{Sequential dynamical system; periodic orbit; complete graph; binary code.}
\begin{document}
\publicationdetails{19}{2017}{3}{10}{2654}\maketitle

\begin{abstract}
Let $[K_n,f,\pi]$ be the (global) SDS map of a sequential dynamical system (SDS) defined over the complete graph $K_n$ using the update order $\pi\in S_n$ in which all vertex functions are equal to the same function $f\colon\mathbb F_2^n\to\mathbb F_2^n$. Let $\eta_n$ denote the maximum number of periodic orbits of period $2$ that an SDS map of the form $[K_n,f,\pi]$ can have. We show that $\eta_n$ is equal to the maximum number of codewords in a binary code of length $n-1$ with minimum distance at least $3$. This result is significant because it represents the first interpretation of this fascinating coding-theoretic sequence other than its original definition. 
\end{abstract}

\section{Introduction}
Suppose we wish to model a finite system in which objects have various states and update their states in discrete time steps. Moreover, assume that the state to which an object updates depends only on the current state of that object along with the states of other nearby or connected objects. We can capture such a system's behavior with a \emph{graph dynamical system}. A graph dynamical system contains a graph representing the connections between objects, a set of states that the objects can adopt, a collection of functions that model how each individual object updates its state in reaction to its neighbor's states, and a rule determining the scheme by which the objects update their states. 

In a series of papers published between 1999 and 2001, Barrett, Mortveit, and Reidys introduced the notion of a sequential dynamical system (SDS), a graph dynamical system in which vertices update their states sequentially \cite{Barrett1,Barrett2,Barrett3}. Subsequently, several researchers have worked to develop a general theory of SDS (see, for example, \cite{Barrett4,Barrett5,Barrett6,Collina,Garcia1,Macauley,
Macauley2,Reidys1}). The article \cite{Collina} is interesting because it shows how SDS, originally proposed as models of computer simulation, are now being studied in relation with Hecke-Kiselman monoids in algebraic combinatorics. We draw most of our terminology and background information concerning SDS from \cite{Mortveit}, a valuable reference for anyone interested in exploring this field.

In the theory of SDS, the primary focus of many research articles is to count or otherwise characterize periodic orbits in the phase spaces of sequential dynamical systems \cite{Aledo, Barrett8, Barrett5, Barrett7, Tosic, Veliz-Cuba, Wu}. For example, the recent paper \cite{Aledo} studies which periodic orbits can coexist in certain SDS and when certain SDS must necessarily have unique fixed points. In particular, that article shows that analogues of Sharkovsky's theorem from continuous dynamics completely fail to hold for many families of SDS. It is common to analyze the dynamics of sequential dynamical systems defined using classical Boolean functions such and OR, AND, NOR, and NAND (see, for example, \cite{Barrett5, Barrett7}). The article \cite{Wu} focuses more generally on SDS defined using so-called ``bi-threshold" functions. By contrast, we will consider SDS defined using a completely arbitrary update function $f$. As we describe later, this function will be the vertex function for every vertex in the graph (this is possible because we will only consider base graphs that are complete). We now proceed to clarify some of these remarks by establishing some notation and definitions.   

If $v$ is a vertex of a graph $Y$, we let $d(v)$ denote the degree of $v$. We often work in the finite field $\mathbb F_2=\{0,1\}$. In doing so, we let $\overline{x}=1+x$ for any $x\in\mathbb F_2$. For any vector $\vec x=(x_1,x_2,\ldots,x_k)\in \mathbb F_2^k$, let $\inv(\vec x)=(\overline{x_1},\overline{x_2},\ldots,\overline{x_k})$. Furthermore, $\text{id}$ will denote the identity permutation $123\cdots n$ (the length $n$ of the permutation $\text{id}$ will always be clear from context).

An SDS is built from the following parts:
\begin{itemize}
\item An undirected simple graph $Y$ with vertices $v_1,v_2,\ldots,v_n$.
\item A set of states $A$. We will typically use the set of states $A=\mathbb F_2=\{0,1\}$. 
\item A collection of \emph{vertex functions} $\{f_{v_i}\}_{i=1}^n$. Each vertex $v_i$ of $Y$ is endowed with its own vertex function $f_{v_i}\colon A^{d(v_i)+1}\rightarrow A$. 
\item A permutation $\pi\in S_n$. The permutation $\pi$ is known as the \emph{update order}. 
\end{itemize}
Let $q(v)$ denote the state of a vertex $v$. Suppose a vertex $v_i$ has neighbors $v_{j_1},v_{j_2},\ldots,v_{j_{d(v_i)}}$, where $j_1<j_2<\cdots<j_s<i<j_{s+1}<j_{s+2}<\cdots<j_{d(v_i)}$. We let \[X(v_i)=(q(v_{j_1}),q(v_{j_2}),\ldots,q(v_{j_s}),q(v_i),q(v_{j_{s+1}}),q(v_{j_{s+2}}),\ldots,q(v_{j_{d(v_i)}})).\] For example, if the vertex $v_3$ has neighbors $v_1$, $v_4$, and $v_6$, we let $X(v_3)=(q(v_1),q(v_3),q(v_4),q(v_6))$. The vector $(q(v_1),q(v_2),\ldots,q(v_n))$, which lists all of the states of the vertices of $Y$ in the order corresponding to the order of the vertex indices, is known as the \emph{system state} of the SDS. Note that if $Y$ is a complete graph and $v_i$ is any vertex of $Y$, then $X(v_i)$ is equal to the system state of the SDS.

From each vertex function $f_{v_i}$, define the \emph{local update function} $L_{v_i}\colon A^n\rightarrow A^n$ by \[L_{v_i}(x_1,x_2,\ldots,x_n)=(x_1,x_2,\ldots,x_{i-1},f_{v_i}(X(v_i)),x_{i+1},\ldots,x_n).\] Combining these local update functions with the update order $\pi=\pi(1)\pi(2)\cdots\pi(n)$ (we have written the permutation $\pi$ as a word), we obtain the SDS map $F\colon A^n\rightarrow A^n$ given by \[F=L_{v_{\pi(n)}}\circ L_{v_{\pi(n-1)}}\circ\cdots\circ L_{v_{\pi(1)}}.\] We will find it useful to introduce an ``intermediate" SDS map $G_i\colon A^n\rightarrow A^n$ for each $i\in\{1,2,\ldots,n\}$, which we define by \[G_i=L_{v_{\pi(i)}}\circ L_{v_{\pi(i-1)}}\circ\cdots\circ L_{v_{\pi(1)}}.\] Thus, $F=G_n$. We use the convention that $G_0$ denotes the identity map from $A^n$ to $A^n$. The vector $G_i(\vec x)$ represents the system state of the SDS obtained by starting with a system state $\vec x$ and updating only the first $i$ vertices in the update order $\pi$. Once the system updates all $n$ vertices (known as a system update), the new system state is $F(\vec x)$. 

Given any SDS on a graph $Y$ with vertex functions $\{f_{v_i}\}_{i=1}^n$ and update order $\pi$, we denote its SDS map $F$ by the triple $[Y, \{f_{v_i}\}_{i=1}^n, \pi]$. If all of the vertex functions $f_{v_i}$ are equal to the same function $f$, we will simply write $[Y,f,\pi]$ for the 
corresponding SDS map (this situation can only occur if the base graph is regular).

\begin{example} \label{Example1}
Consider the graph $Y$ shown in block A of Figure \ref{Fig1}. 
We define an SDS over $Y$ using the update order $\pi=2413$ and the vertex functions $f_{v_i}$ given by \[f_{v_1}(x_1,x_2,x_3,x_4)=x_1x_3+x_2+x_4,\] \[f_{v_2}(x_1,x_2)=x_1x_2+1,\] \[f_{v_3}(x_1,x_2,x_3)=x_1+x_2+x_3,\] and \[f_{v_4}(x_1,x_2,x_3)=x_1x_2+x_3.\] The initial system state of this SDS is $(0,0,0,1)$, as shown by the blue labels in block A of Figure \ref{Fig1}. Because $\pi(1)=2$, we first update the vertex $v_2$ using the vertex function $f_{v_2}$. We have \[f_{v_2}(X(v_2))=f_{v_2}(q(v_1),q(v_2))=f_{v_2}(0,0)=1,\] so the vertex $v_2$ updates to the new state $1$. This intermediate update is shown in block B of Figure \ref{Fig1}. Another way to understand the transition from block A to block B in the figure is to see that we have changed the system state of the SDS by applying the local update function $L_{v_2}$. Indeed, $L_{v_2}(0,0,0,1)=(0,1,0,1)$. In a similar fashion, we next update vertices $v_4$, $v_1$, and $v_3$. Letting $F=[Y,\{f_{v_i}\}_{i=1}^4,\pi]$, we find that $F(0,0,0,1)=(0,1,1,1)$, as shown in block E of the figure. In other words, through a sequence of local updates, the system update transformed the system state $(0,0,0,1)$ into the new system state $(0,1,1,1)$. 
\end{example} 
\begin{figure}
\begin{center} 
\includegraphics[height=24.5mm]{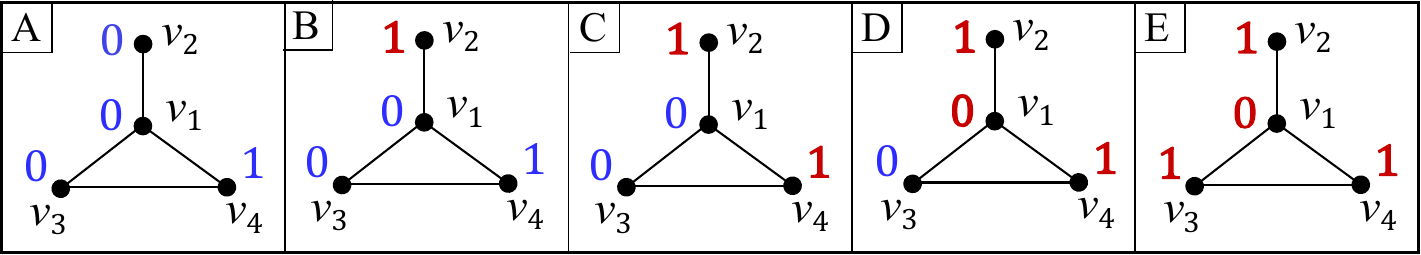}
\end{center}
\caption{\footnotesize{A system update of the SDS of Example \ref{Example1}. Block A shows the inital state of the SDS. Blocks B, C, and D show the intermediate steps of the system update. Block E shows the system state obtained after completing the system update.}} \label{Fig1} 
\end{figure} 

The SDS map $F$ tells us how the states of the vertices of the graph $Y$ change when we update the graph in a sequential manner. A useful tool for visualizing how $F$ acts on the system's states is the \emph{phase space} of the SDS. The phase space, denoted $\Gamma(F)$, is the directed graph with vertex set $V(\Gamma(F))=A^n$ and edge set \[E(\Gamma(F))=\{(\vec x,\vec y)\in A^n\times A^n\colon \vec y=F(\vec x)\}.\] In other words, we draw a directed edge from $\vec x$ to $F(\vec x)$ for each $\vec x\in A^n$. As an example, the phase space of the SDS given in Example \ref{Example1} is shown in Figure \ref{Fig2}. 
\begin{figure}
\begin{center} 
\includegraphics[height=60mm]{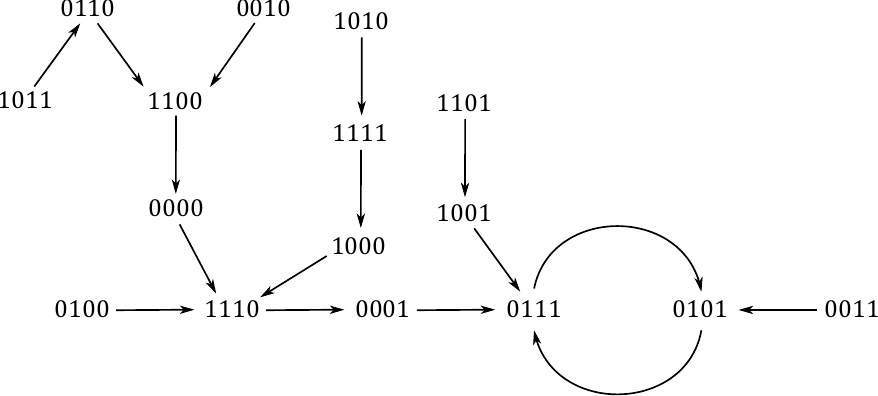}
\end{center}
\caption{\footnotesize{The phase space of the SDS described in Example \ref{Example1}. Note that we have omitted parentheses and commas from the vectors in order to improve the aesthetics of the image. For example, the vector $(1,0,1,1)$ is written as $1011$.}} 
\label{Fig2} 
\end{figure} 
Notice that the phase space shown in Figure \ref{Fig2} has a single $2$-cycle (formed from the vertices $0111$ and $0101$). In general, a phase space of an SDS can have many cycles of various lengths. This leads us to some interesting questions. For example, when is it possible to have a phase space composed entirely of $2$-cycles? Can we find an upper bound on the number of $2$-cycles that can appear in the phase spaces of certain SDS? If we can show, for instance, that certain SDS defined over a graph with $n$ vertices cannot have phase spaces consisting entirely of $2$-cycles, then it will follow that the very natural function $\inv\colon\mathbb F_2^n\rightarrow\mathbb F_2^n$ cannot be the SDS map of any of those SDS. 

In this paper, we study the number of $2$-cycles that can appear in the phase spaces of SDS defined over a complete graph $K_n$ in which all vertex functions are the same. More formally, we give the following definition. 
\begin{definition} \label{Def3}
For a positive integer $n$, let $(\mathbb F_2)^{\mathbb F_2^n}$ be the set of all functions $g\colon\mathbb F_2^n\rightarrow\mathbb F_2$. For each $g\in(\mathbb F_2)^{\mathbb F_2^n}$ and $\pi\in S_n$, let $\eta(g,\pi)$ denote the number of $2$-cycles in the phase space $\Gamma([K_n,g,\pi])$. Define \[\eta_n=\max_{g\in (\mathbb F_2)^{\mathbb F_2^n}}\eta(g,\text{id}).\] 
\end{definition}    
\begin{remark} 
Our decision to use the identity update order $\text{id}$ in the definition of $\eta_n$ in Definition \ref{Def3} stems from a desire for convenience, but we lose no generality in making such a decision because we are working over complete graphs. In other words, \[\eta_n=\max_{g\in (\mathbb F_2)^{\mathbb F_2^n}}\eta(g,\text{id})=\max_{\substack{g\in (\mathbb F_2)^{\mathbb F_2^n} \\ \pi\in S_n}}\eta(g,\pi).\]  
\end{remark} 
In the next section, we reformulate the problem of determining $\eta_n$ in terms of finding the clique number of a certain graph. We then show that $\eta_{n+1}=A(n,3)$, where $A(n,3)$ denotes the maximum number of codewords in a binary code of length $n$ with minimum distance at least $3$. The sequence $A(n,3)$, which is sequence A005864 in Sloane's Online Encyclopedia of Integer Sequences \cite{OEIS}, has been a fascinating and mysterious subject of inquiry in coding theory \cite{MacWilliams, Milshtein, Ostergard, Romanov}. Our result is noteworthy because, to the best of our knowledge, it provides the only known interpretation of this sequence other than its original definition. 

\section{Searching for $\eta_n$} 
If $X$ is a set and $F\colon X\to X$ is a function, we say an element $\vec x$ of $X$ is a periodic point of period $2$ of $F$ if $F^2(\vec x)=\vec x$ and $F(\vec x)\neq\vec x$. Typically, we use the field $\mathbb F_2$ as our set of states. However, in the following lemma, we may use any set of states $A$ so long as $\vert A\vert\geq 2$. 
\begin{lemma} \label{Lem8}
Let $n\geq 2$ be an integer. Let $f\colon A^n\rightarrow A$ be a function, and let $\pi=\pi(1)\pi(2)\cdots\pi(n)\in S_n$ be a permutation. Suppose $\vec x=(x_1,x_2,\ldots, x_n)$ is a periodic point of period $2$ of the SDS map $F=[K_n,f,\pi]$. Write $F(\vec x)=\vec z=(z_1,z_2,\ldots,z_n)$.

\noindent
For each $k\in\{1,2,\ldots,n\}$, we have $x_k\neq z_k$. In particular, if $A=\mathbb F_2$, then $F(\vec x)=\inv(\vec x)$. 
\end{lemma}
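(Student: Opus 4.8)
The plan is to track how the coordinates of $\vec{x}$ evolve under a single system update, exploiting the fact that over $K_n$ every vertex function sees the entire current system state. Let me write $\vec{y}^{(i)} = G_i(\vec{x})$ for the intermediate states, so $\vec{y}^{(0)} = \vec{x}$ and $\vec{y}^{(n)} = \vec{z}$. When we apply $L_{v_{\pi(i)}}$, only coordinate $\pi(i)$ changes, and it changes to $f(\vec{y}^{(i-1)})$ — note that $X(v_{\pi(i)})$ is the full current system state $\vec{y}^{(i-1)}$ because the graph is complete. Since $\pi$ is a permutation, each coordinate $k$ is updated exactly once during the system update, at step $i$ with $\pi(i) = k$; thus $z_k = f(\vec{y}^{(i-1)})$ where $i = \pi^{-1}(k)$, and this value is never altered again, so $y^{(j)}_k = z_k$ for all $j \geq i$.

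The key step is a proof by contradiction: suppose $x_k = z_k$ for some coordinate $k$, and take the \emph{last} such coordinate in the update order — that is, choose $i$ maximal such that, writing $k = \pi(i)$, we have $x_k = z_k$. I claim that then $\vec{y}^{(i-1)}$ and $\vec{y}^{(n)} = \vec{z}$ agree in every coordinate, forcing $f(\vec{y}^{(i-1)}) = f(\vec{z})$. Indeed, for coordinates $\pi(j)$ with $j < i$: these were last touched before step $i$, so $y^{(i-1)}_{\pi(j)} = z_{\pi(j)}$ already. For the coordinate $\pi(i) = k$ itself: $y^{(i-1)}_k = x_k$ (it hasn't been updated yet at step $i-1$), and $x_k = z_k$ by assumption. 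For coordinates $\pi(j)$ with $j > i$: by the maximality of $i$ we have $x_{\pi(j)} = z_{\pi(j)}$ is \emph{false}, i.e. $x_{\pi(j)} \neq z_{\pi(j)}$; but also $y^{(i-1)}_{\pi(j)} = x_{\pi(j)}$ since those coordinates aren't touched until step $j > i-1$. This is exactly the wrong direction, so I instead need to run the argument on the \emph{second} application of $F$.

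Here is the cleaner route. Since $\vec{x}$ has period $2$, we have $F(\vec{z}) = \vec{x}$. Apply the same update-order bookkeeping to the computation of $F(\vec{z})$: writing $\vec{w}^{(i)} = G_i(\vec{z})$, coordinate $k = \pi(i)$ satisfies $x_k = f(\vec{w}^{(i-1)})$. Now suppose for contradiction that the set $S = \{k : x_k = z_k\}$ is nonempty, and let $i$ be \emph{minimal} with $\pi(i) \in S$; set $k = \pi(i)$. Compare $\vec{w}^{(i-1)} = G_{i-1}(\vec{z})$ with $\vec{y}^{(i-1)} = G_{i-1}(\vec{x})$. For $j < i$: coordinate $\pi(j)\notin S$ by minimality, and at step $i-1$ both computations have already finalized coordinate $\pi(j)$ to its image value, which is $x_{\pi(j)}$ in the $\vec{z}\mapsto\vec{x}$ computation and $z_{\pi(j)}$ in the $\vec{x}\mapsto\vec{z}$ computation — so they differ there, which is again not immediately what I want. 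The genuine resolution, which I expect to be the technical heart, is to induct on $i$ jointly over \emph{both} half-orbits, proving the statement "for all $j \le i$, $x_{\pi(j)} \ne z_{\pi(j)}$, and moreover $G_i(\vec x)$ and $G_i(\vec z)$ agree in coordinates $\pi(1),\dots,\pi(i)$ after swapping, i.e. $G_i(\vec z)_{\pi(j)} = x_{\pi(j)}$ and $G_i(\vec x)_{\pi(j)} = z_{\pi(j)}$ for $j \le i$"; then at step $i+1$, the inputs $G_i(\vec x)$ and $G_i(\vec z)$ to the common function $f$ agree on coordinates $\pi(1),\dots,\pi(i)$ precisely when... — and here one checks that agreement of the two inputs would force the two outputs $z_{\pi(i+1)}$ and $x_{\pi(i+1)}$ to be equal, so they must disagree somewhere among the remaining coordinates, which by the induction hypothesis pins down exactly that $x_{\pi(i+1)} \ne z_{\pi(i+1)}$. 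The main obstacle is setting up this joint induction hypothesis so that the inductive step is a clean logical equivalence rather than a case analysis; once it is in place, taking $i = n$ gives $x_k \ne z_k$ for every $k$, and over $\mathbb{F}_2$ this says exactly $z_k = \overline{x_k}$, i.e. $F(\vec{x}) = \inv(\vec{x})$.
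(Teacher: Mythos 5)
Your bookkeeping is exactly right (over $K_n$ every update sees the full intermediate state, so $f(G_{j-1}(\vec x))=z_{\pi(j)}$ and $f(G_{j-1}(\vec z))=x_{\pi(j)}$, with $f(G_n(\vec x))=x_{\pi(1)}$ and $f(G_n(\vec z))=z_{\pi(1)}$ coming from periodicity), but the argument you build on top of it has a genuine hole, which you yourself flag. The inductive step of your ``joint induction'' is a non sequitur: your induction hypothesis gives that $G_i(\vec x)=(\ldots,z_{\pi(j)},\ldots,x_{\pi(j')},\ldots)$ and $G_i(\vec z)=(\ldots,x_{\pi(j)},\ldots,z_{\pi(j')},\ldots)$ \emph{disagree} in the already-updated coordinates, and from two different inputs to a completely arbitrary $f$ you can conclude nothing about whether the outputs $z_{\pi(i+1)}=f(G_i(\vec x))$ and $x_{\pi(i+1)}=f(G_i(\vec z))$ are equal or not. (The implication ``equal inputs force equal outputs'' points the wrong way for what you want.) Worse, the statement you are inducting on cannot even be started: there is no way to establish the base case $x_{\pi(1)}\neq z_{\pi(1)}$ in isolation, because the desired conclusion is not provable coordinate-by-coordinate in a forward induction; a single coordinate equality is only contradictory because of what it forces globally.

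The missing idea, which is the heart of the paper's proof, is to argue by contradiction and propagate an \emph{equality} rather than an inequality. Suppose $x_{\pi(i)}=z_{\pi(i)}$ for some $i$. Then the $i$-th update is a no-op on both half-orbits: the vertex $v_{\pi(i)}$ updates from $x_{\pi(i)}$ to $z_{\pi(i)}$ in the computation of $F(\vec x)$ and from $z_{\pi(i)}$ to $x_{\pi(i)}$ in the computation of $F(\vec z)$, so $G_{i-1}(\vec x)=G_i(\vec x)$ and $G_{i-1}(\vec z)=G_i(\vec z)$. Applying $f$ to these \emph{equal} vectors and using your own identities yields $z_{\pi(i)}=f(G_{i-1}(\vec x))=f(G_i(\vec x))=z_{\pi(i+1)}$ and $x_{\pi(i)}=f(G_{i-1}(\vec z))=f(G_i(\vec z))=x_{\pi(i+1)}$, hence $x_{\pi(i+1)}=z_{\pi(i+1)}$; the case $i=n$ wraps around to $i=1$ via $f(G_n(\vec x))=x_{\pi(1)}$ and $f(G_n(\vec z))=z_{\pi(1)}$. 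Thus one equality propagates cyclically to all coordinates, forcing $\vec z=\vec x$ and contradicting $F(\vec x)\neq\vec x$. With that replacement your write-up becomes correct; over $\mathbb F_2$ the conclusion $x_k\neq z_k$ for all $k$ is exactly $F(\vec x)=\inv(\vec x)$, as you say.
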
 
\begin{proof} 
For any $j\in\{1,2,\ldots,n\}$, recall that $G_{j-1}(\vec{x})$ is the system state that results from starting with the initial system state $\vec x$ and then updating the vertices $v_{\pi(1)},v_{\pi(2)},\ldots,v_{\pi(j-1)}$ (in this order). When we update the state of the vertex $v_{\pi(j)}$, the state to which $v_{\pi(j)}$ updates is given by $f(G_{j-1}(\vec x))$. On the other hand, the state to which $v_{\pi(j)}$ updates must be $z_{\pi(j)}$, the state of $v_{\pi(j)}$ in the system state $\vec z$. It follows that $f(G_{j-1}(\vec x))=z_{\pi(j)}$. Note that \[f(G_n(\vec x))=f(F(\vec x))=f(\vec z)=f(G_0(\vec z))=x_{\pi(1)}.\] These same arguments show that $f(G_{j-1}(\vec z))=x_{\pi(j)}$ for any $j\in\{1,2,\ldots,n\}$ and that $f(G_n(\vec z))=z_{\pi(1)}$.

If $i\in\{1,2,\ldots,n-1\}$, then the preceding paragraph tells us that
\[f(G_{i-1}(\vec x))=z_{\pi(i)}, \hspace{1.5cm}f(G_{i-1}(\vec z))=x_{\pi(i)},\] 
\begin{equation} \label{Eq1}
f(G_i(\vec x))=z_{\pi(i+1)},\hspace{1.5cm}f(G_i(\vec z))=x_{\pi(i+1)}. 
\end{equation} 
We also have
\[f(G_{n-1}(\vec x))=z_{\pi(n)},\hspace{1.5cm}f(G_{n-1}(\vec z))=x_{\pi(n)},\] 
\begin{equation} \label{Eq2} 
f(G_n(\vec x))=x_1,\hspace{1.5cm}f(G_n(\vec z))=z_1. 
\end{equation} 
Notice that if $x_{\pi(i)}=z_{\pi(i)}$, then updating the vertex $v_{\pi(i)}$ doesn't ``change'' anything; that is, $G_{i-1}(\vec{x})=G_i(\vec{x})$ and $G_{i-1}(\vec{z}) = G_i(\vec{z}).$  With the help of the equations in \eqref{Eq1} and \eqref{Eq2}, this shows that if $x_{\pi(i)}=z_{\pi(i)}$ for some $i\in\{1,2,\ldots,n\}$, then $x_{\pi(i+1\pmod n)}=z_{\pi(i+1\pmod n)}$. As a consequence, we see that if $x_{\pi(i)}=z_{\pi(i)}$ for some $i\in\{1,2,\ldots,n\}$, then $\vec x=\vec z=F(\vec x)$. However, we are assuming that $\vec x$ is a periodic point of $F$ of period $2$, so $F(\vec x)\neq\vec x$ by definition. Thus, $x_{\pi(i)}\neq z_{\pi(i)}$ for all $i\in\{1,2,\ldots,n\}$, which proves the lemma.  
\end{proof} 

Let $w=w_1w_2\ldots w_k$ be a finite word over the alphabet $\mathbb F_2$. We say that a vector $(x_1,x_2,\ldots,x_n)\in\mathbb F_2^n$ \emph{contains the subsequence $w$} if there exist $i_1,i_2,\ldots,i_k\in\{1,2,\ldots,n\}$ such that $i_1<i_2<\cdots<i_k$ and $x_{i_j}=w_j$ for all $j\in\{1,2,\ldots,k\}$ (this is sometimes expressed by saying that $w$ is a ``scattered subword" of $(x_1,x_2,\ldots,x_n)$). For example, the vector $(x_1,x_2,x_3,x_4,x_5)=(1,0,1,1,0)$ contains the subsequence $100$ because $x_1=1$, $x_2=0$, and $x_5=0$. However, the vector $(1,0,1,1,0)$ does not contain the subsequence $001$. This leads us to the following definition.
\begin{definition} \label{Def1}
Let $w$ be a finite word over the alphabet $\mathbb F_2$. Define $D_n(w)$ to be the set of vectors $\vec x\in\mathbb F_2^n$ such that $\vec x$ contains the subsequence $w$. 
\end{definition}
\begin{definition} \label{Def2}
Let $n\geq 2$ be an integer. Let \[\widehat{\mathbb F}_2^n=\{(x_1,x_2,\ldots,x_n)\in\mathbb F_2^n\colon x_1=0\}.\] Define $\widehat H_n$ to be the undirected simple graph with vertex set \[V(\widehat H_n)=\widehat{\mathbb F}_2^n\] and edge set \[E(\widehat H_n)=\{\{\vec x,\vec y\}\subseteq\widehat{\mathbb F}_2^n\colon\vec x+\vec y\in D_n(101)\}.\]  
\end{definition}
The following two lemmas link the graphs $\widehat H_n$ to our study of $2$-cycles in the phase spaces of SDS defined over complete graphs. 

\begin{lemma} \label{Lem2}
Let $n\geq 2$ be an integer, and let $C=\{\vec x_1,\vec x_2,\ldots, \vec x_k\}$ be a clique of order $k$ of $\widehat H_n$. There exists a map $f\colon \mathbb F_2^n\rightarrow \mathbb F_2$ such that each $\vec x_i\in C$ is in a $2$-cycle of $\Gamma([K_n,f,\text{\emph{id}}])$. Moreover, no two distinct $\vec{x}_i, \vec{x}_j$ are contained in the same 2-cycle of $\Gamma([K_n,f,\text{\emph{id}}])$.
\end{lemma}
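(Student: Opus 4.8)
The plan is to build the vertex function $f$ by hand: I would prescribe its values along the trajectories that the clique members must follow in order to lie in $2$-cycles, and then show that the clique hypothesis keeps these prescriptions from clashing. By Lemma~\ref{Lem8}, a vector $\vec x_i\in\widehat{\mathbb F}_2^n$ can lie in a $2$-cycle of $F:=[K_n,f,\text{id}]$ only if $F(\vec x_i)=\inv(\vec x_i)$, and since $\vec x_i$ begins with $0$ while $\inv(\vec x_i)$ begins with $1$ these two vectors are automatically distinct; so it suffices to arrange that $F$ carries $\vec x_i$ to $\inv(\vec x_i)$ and $\inv(\vec x_i)$ back to $\vec x_i$.

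First I would record the intermediate system states explicitly. For $\vec x\in\widehat{\mathbb F}_2^n$ put $P_0(\vec x)=\vec x$ and, for $1\le j\le n$,
\[P_j(\vec x)=(1,\overline{x_2},\ldots,\overline{x_j},x_{j+1},\ldots,x_n),\qquad Q_j(\vec x)=\inv\big(P_j(\vec x)\big),\]
with $Q_0(\vec x)=\inv(\vec x)$. Since over the complete graph each local update function just overwrites the relevant coordinate with $f$ applied to the current state, a straightforward induction on $j$ shows that if $f$ satisfies
\[f\big(P_j(\vec x_i)\big)=\overline{(x_i)_{j+1}}\quad\text{and}\quad f\big(Q_j(\vec x_i)\big)=(x_i)_{j+1}\qquad(0\le j\le n-1),\]
then $G_j(\vec x_i)=P_j(\vec x_i)$ and $G_j\big(\inv(\vec x_i)\big)=Q_j(\vec x_i)$ for all $j$; in particular $F(\vec x_i)=P_n(\vec x_i)=\inv(\vec x_i)$ and $F\big(\inv(\vec x_i)\big)=Q_n(\vec x_i)=\vec x_i$. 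So the whole problem reduces to showing that the displayed assignment is consistent, that is, that whenever two of the labels $P_j(\vec x_i),Q_j(\vec x_i)$ (with $1\le i\le k$ and $0\le j\le n-1$) name the same vector of $\mathbb F_2^n$, they call for the same value of $f$. Once that is checked, $f$ may be extended to the rest of $\mathbb F_2^n$ arbitrarily.

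This consistency check is the heart of the argument and the step I expect to be the main obstacle. The claim is that any coincidence among these labels involving two \emph{distinct} clique members $\vec x_i\ne\vec x_{i'}$ is in fact impossible, while any coincidence involving a single clique member is either impossible or just identifies a label with itself; either way the assignment is unambiguous. Since $Q_j=\inv\circ P_j$, a coincidence $Q_j(\vec x_i)=Q_{j'}(\vec x_{i'})$ is, after applying $\inv$, the same as $P_j(\vec x_i)=P_{j'}(\vec x_{i'})$, so only coincidences of the forms $P_j(\vec x_i)=P_{j'}(\vec x_{i'})$ and $P_j(\vec x_i)=Q_{j'}(\vec x_{i'})$ need examining; in each such case a coordinate-by-coordinate comparison (using that $\vec x_i,\vec x_{i'}$ begin with $0$) either is immediately inconsistent on first coordinates or forces the disagreement set $\Delta=\{m:(x_i)_m\ne(x_{i'})_m\}$ to be a block of consecutive integers, such as $\{j+1,\ldots,j'\}$ or $\{j+1,\ldots,n\}$. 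If $i\ne i'$ then $\Delta\ne\emptyset$, so $\Delta$ is a nonempty block and hence contains no ``gap'' $a<b<c$ with $a,c\in\Delta$ and $b\notin\Delta$; but $\vec x_i+\vec x_{i'}$, whose support is exactly $\Delta$, lies in $D_n(101)$ because distinct members of a clique of $\widehat H_n$ are adjacent, and containing the subsequence $101$ is precisely the existence of such a gap — a contradiction. If $i=i'$, the same comparison forces the relevant block to be empty; for the $P$-versus-$P$ type this gives $j=j'$, so the two labels are literally the same, and for the $P$-versus-$Q$ type it is impossible because the forced block has an index out of range. Hence $f$ is well defined, and by the induction of the previous paragraph each $\vec x_i$ lies in the $2$-cycle $\{\vec x_i,\inv(\vec x_i)\}$ of $\Gamma(F)$.

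Finally, the ``moreover'' clause is immediate: the $2$-cycle through $\vec x_i$ is exactly $\{\vec x_i,\inv(\vec x_i)\}$, and for $j\ne i$ the vector $\vec x_j$ differs from $\vec x_i$ (the clique members are distinct) and, since it begins with $0$, also differs from $\inv(\vec x_i)$; so $\vec x_i$ and $\vec x_j$ cannot lie in the same $2$-cycle.
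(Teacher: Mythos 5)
Your proposal is correct and follows essentially the same route as the paper: you prescribe $f$ along the trajectories $P_j(\vec x_i)$, $Q_j(\vec x_i)$ (exactly the paper's two families of intermediate states), reduce well-definedness to ruling out coincidences between labels, and use adjacency in $\widehat H_n$ (the $101$ subsequence in $\vec x_i+\vec x_{i'}$) to rule them out, finishing with the same first-coordinate argument for distinctness of the $2$-cycles. Your observation that a coincidence forces the disagreement set to be an interval, which is incompatible with containing $101$, is a mildly cleaner packaging of the paper's explicit case analysis with the indices $r_1<r_2<r_3$, but it is the same underlying argument.
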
 
\begin{proof} 
For each $i\in\{1,2,\ldots,k\}$, let $\vec x_i=(a_{i1},a_{i2},\ldots,a_{in})$, where $a_{i1}=0$ by the definition of $\widehat H_n$. Define the map $f\colon \mathbb F_2^n\rightarrow \mathbb F_2$ as follows:
\begin{itemize}
\item If there are some $i, \ell$ such that \[\vec \alpha=(\overline{a_{i1}},\overline{a_{i2}},\ldots,\overline{a_{i\ell}},a_{i(\ell+1)},a_{i(\ell+2)},\ldots,a_{in}),\] then let $f(\vec\alpha)=\overline{a_{i(\ell+1)}}$.
\item If there are some $i, \ell$ such that \[\vec \alpha=(a_{i1},a_{i2},\ldots,a_{i\ell},\overline{a_{i(\ell+1)}},\overline{a_{i(\ell+2)}},\ldots,\overline{a_{in}}),\] then let $f(\vec \alpha)=a_{i(\ell+1)}$. 
\item Otherwise, let $f(\vec \alpha)=0$.
\end{itemize}  

We first need to show that $f$ is well-defined. To do so, we show that for any $i,j\in\{1,2,\ldots,k\}$ and any $\ell,m\in\{0,1,\ldots,n-1\}$ with $i\neq j$ or $\ell\neq m$, we have \begin{equation} \label{Eq4}
(\overline{a_{i1}},\ldots,\overline{a_{i\ell}},a_{i(\ell+1)},\ldots,a_{in})\neq(\overline{a_{j1}},\ldots,\overline{a_{jm}},a_{j(m+1)},\ldots,a_{jn}),
\end{equation} 
\begin{equation} \label{Eq5} 
(\overline{a_{i1}},\ldots,\overline{a_{i\ell}},a_{i(\ell+1)},\ldots,a_{in})\neq(a_{j1},\ldots,a_{jm},\overline{a_{j(m+1)}},\ldots,\overline{a_{jn}}),
\end{equation} 
and 
\begin{equation}\label{Eq13}
(a_{i\ell},\ldots,a_{i\ell},\overline{a_{i(\ell+1)}},\ldots,\overline{a_{in}}) \neq (a_{j1},\ldots,a_{jm},\overline{a_{j(m+1)}},\ldots,\overline{a_{jn}}).
\end{equation}
This will show that we have not accidentally defined $f(\vec \alpha)=0$ and $f(\vec \alpha)=1$ for the same vector $\vec \alpha$. 

It is easy to see that the lack of equality must hold in (3), (4), and (5) if $i=j$ and $\ell\neq m$. Thus, we may assume $i\neq j$. If equality holds in (3) or (5), then $x_i+x_j =(0,0,\ldots,0,1,\ldots,1,0,\ldots, 0)\not\in D_n(101)$, which is a contradiction. Similarly, if equality holds in (4), then it follows from the fact that $a_{i1}=a_{j1}=0$ that $x_i+x_j=(0,0,\ldots,0,1,1,\ldots,1)\not\in D_n(101)$. This is again a contradiction. Thus, $f$ is well-defined.

Let $F=[K_n,f,\text{id}]$. We now show that for each $i\in\{1,2,\ldots,k\}$, the vector $\vec x_i$ is in a $2$-cycle of $\Gamma(F)$. Choose some $i\in\{1,2,\ldots,k\}$. It follows from the definition of $f$ that for each $\ell\in\{1,2,\ldots,n\}$ \[G_\ell(\vec x_i)=(\overline{a_{i1}},\ldots,\overline{a_{i\ell}},a_{i(\ell+1)},\ldots,a_{in})\] and \[G_\ell(\inv(\vec x_i))=G_\ell(\overline{a_{i1}},\overline{a_{i2}},\ldots,\overline{a_{in}})=(a_{i1},\ldots,a_{i\ell},\overline{a_{i(\ell+1)}},\ldots,\overline{a_{in}}).\] In particular, $F(\vec x_i)=\inv(\vec x_i)\neq \vec x_i$ and $F^2(\vec x_i)=F(\inv(\vec x_i))=\vec x_i$. In other words, $\vec x_i$ is in a $2$-cycle of $\Gamma(F)$. 

Finally, choose some distinct $i,j\in\{1,2,\ldots,k\}$. It is easy to see that the vectors $\vec x_i$ and $\vec x_j$ are in distinct $2$-cycles of $\Gamma(F)$. Since $\vec x_i\neq \vec x_j$, the only way the vectors $\vec x_i$ and $\vec x_j$ could be in the same $2$-cycle of $\Gamma(F)$ is if $F(\vec x_i)=\vec x_j$. We have just shown that $F(\vec x_i)=\inv(\vec x_i)$, so $\vec x_i$ could only be in the same $2$-cycle as $\vec x_j$ if $\vec x_j=\inv(\vec x_i)$. However, this is impossible because $\vec x_i$ and $\vec x_j$ have the same first coordinate (namely, $0$).
\end{proof}

\begin{lemma} \label{Lem3}
Let $n\geq 2$ be an integer, and let $\vec x$ and $\vec y$ be distinct nonadjacent vertices of $\widehat H_n$. Let $f\colon \mathbb F_2^n\rightarrow \mathbb F_2$ be a function. If $\vec x$ is in a $2$-cycle of the phase space $\Gamma([K_n,f,\text{\emph{id}}])$, then $\vec y$ is not in a $2$-cycle of $\Gamma([K_n,f,\text{\emph{id}}])$.\end{lemma}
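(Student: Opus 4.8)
The plan is to argue by contradiction: assume both $\vec x$ and $\vec y$ lie in $2$-cycles of $F=[K_n,f,\text{id}]$, and derive that $\vec x$ and $\vec y$ must then be adjacent in $\widehat H_n$, contradicting the hypothesis. The starting point is Lemma \ref{Lem8}: since $\vec x$ is a period-$2$ point we have $F(\vec x)=\inv(\vec x)$, and likewise $F(\vec y)=\inv(\vec y)$. Writing $\vec x=(a_1,\ldots,a_n)$ and $\vec y=(b_1,\ldots,b_n)$ with $a_1=b_1=0$, the key observation (exactly as in the proof of Lemma \ref{Lem2}, but read in reverse) is that tracking the intermediate states forces, for every $\ell\in\{0,1,\ldots,n\}$,
\[
f(\overline{a_1},\ldots,\overline{a_\ell},a_{\ell+1},\ldots,a_n)=\overline{a_{\ell+1}}
\quad\text{and}\quad
f(\overline{b_1},\ldots,\overline{b_\ell},b_{\ell+1},\ldots,b_n)=\overline{b_{\ell+1}},
\]
with the index $\ell+1$ read cyclically (i.e.\ coordinate $1$ when $\ell=n$), since updating vertex $v_{\ell+1}$ with current state $G_\ell(\vec x)$ must produce the $(\ell+1)$st coordinate of $\inv(\vec x)$. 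So the vectors $G_\ell(\vec x)$ sweep through a chain of $n+1$ intermediate states, and on each of them $f$ takes a prescribed value; the same holds along the chain for $\vec y$.

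Next I would suppose, for contradiction, that $\vec x$ and $\vec y$ are \emph{not} adjacent in $\widehat H_n$, i.e.\ $\vec x+\vec y\notin D_n(101)$. Since $\vec x\neq\vec y$ and they agree in coordinate $1$, the support of $\vec x+\vec y$ is a nonempty subset of $\{2,\ldots,n\}$; saying $\vec x+\vec y$ avoids the subsequence $101$ means the positions where $\vec x$ and $\vec y$ differ, listed in increasing order, contain no gap — equivalently, the set $S=\{t : a_t\neq b_t\}$ is a set of \emph{consecutive} integers $\{p,p+1,\ldots,q\}$ with $2\le p\le q\le n$ (if there were two runs of disagreement separated by an agreement, $\vec x+\vec y$ would contain $101$). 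The strategy is then to exhibit a single vector $\vec\alpha$ that appears in \emph{both} the $\vec x$-chain and the $\vec y$-chain of intermediate states but at which the two prescribed values of $f$ conflict. Concretely, consider $\vec\alpha=G_{p-1}(\vec x)=(\overline{a_1},\ldots,\overline{a_{p-1}},a_p,\ldots,a_n)$: because $a_t=b_t$ for $t<p$ and for $t>q$, and because $\{p,\ldots,q\}$ is exactly where they differ, one checks that flipping the first $p-1$ entries of $\vec x$ gives the same vector as flipping the first $p-1$ entries of $\vec y$ if and only if the blocks beyond position $p-1$ already match — which fails on $\{p,\ldots,q\}$ unless $S$ is empty. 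So instead the right coincidence is between $G_{p-1}(\vec x)$ and some $G_m(\vec y)$ where the flip on $\vec y$ has been carried past the whole run $\{p,\ldots,q\}$: take $\vec\alpha = G_{p-1}(\vec x)$ and compare it with $G_{q}(\inv(\vec y))=(b_1,\ldots,b_q,\overline{b_{q+1}},\ldots,\overline{b_n})$, noting that on $\{1,\ldots,p-1\}$ we have $\overline{a_t}=\overline{b_t}=b_t$ is false — here I need to be careful, so the cleanest route is to compare the chain for $\vec x$ against the chain for $\inv(\vec y)$ (equivalently for $\vec y$ run "backwards"), locating an index where the two chains pass through a common vector while demanding $f$-values $\overline{a_{\ell+1}}$ and $\overline{b_{\ell+1}}$ that must agree yet cannot because coordinate $\ell+1$ lies in $S$.

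The main obstacle — and the part requiring the most care — is bookkeeping: pinning down exactly which intermediate state along the $\vec x$-chain equals which intermediate state along the $\vec y$-chain (or the $\inv(\vec y)$-chain), and verifying that the $f$-value forced by one chain is the negation of the value forced by the other. This is essentially the contrapositive of the well-definedness argument in Lemma \ref{Lem2}: there, adjacency in $\widehat H_n$ was used to prove the candidate vectors in \eqref{Eq4}, \eqref{Eq5}, \eqref{Eq13} are all distinct, so that assigning $f$ prescribed values on them is consistent; here, \emph{non}-adjacency is exactly the failure of one of those distinctness assertions, producing a vector $\vec\alpha$ on which consistency would force $0=1$. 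Once that collision vector is identified, the contradiction is immediate: the hypothesis "$\vec x$ is in a $2$-cycle" gives $f(\vec\alpha)=\overline{a_{\ell+1}}$, the hypothesis "$\vec y$ is in a $2$-cycle" gives $f(\vec\alpha)=\overline{b_{\ell+1}}$, and $a_{\ell+1}\neq b_{\ell+1}$ since $\ell+1\in S$. Hence $\vec y$ cannot lie in a $2$-cycle, completing the proof. I would structure the write-up by first recording the "$f$ on the chain" equations as a short preliminary observation, then doing the run-structure analysis of non-adjacency, then the collision.
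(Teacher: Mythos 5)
Your overall strategy---assume both $\vec x$ and $\vec y$ lie in $2$-cycles, use Lemma \ref{Lem8} to force the intermediate states $G_\ell(\vec x)=(\overline{a_1},\ldots,\overline{a_\ell},a_{\ell+1},\ldots,a_n)$ together with prescribed $f$-values along them, and then find a collision between the two chains---is exactly the paper's, but the collision step, which is the heart of the proof, is where your sketch goes wrong and is left unresolved. Writing the difference set as $S=\{p,\ldots,q\}$, the chains $\{G_\ell(\vec x)\}_\ell$ and $\{G_m(\vec y)\}_m$ never coincide at the \emph{same} index (equality of $G_\ell(\vec x)$ and $G_\ell(\vec y)$ forces $\vec x=\vec y$), so there is no vector $\vec\alpha$ at which the two forced values are $\overline{a_{\ell+1}}$ and $\overline{b_{\ell+1}}$ with $\ell+1\in S$; your claim that ``the contradiction is immediate'' from a single collision rests on a coincidence pattern that does not occur. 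The route you call cleanest---comparing the $\vec x$-chain with the $\inv(\vec y)$-chain---is worse: for $\ell,m\geq 1$ those states have first coordinates $\overline{a_1}=1$ and $b_1=0$ respectively, so when $q<n$ the two chains share no vector at all. The collisions that actually exist (and are what the paper uses, with $r=p$, $s=q$) are between the two forward chains: $G_{p-1}(\vec x)=G_q(\vec y)$ and $G_q(\vec x)=G_{p-1}(\vec y)$. Neither alone is contradictory: the first forces $\overline{a_p}=\overline{b_{q+1}}$, i.e.\ $a_p=a_{q+1}$, while the second forces $\overline{a_{q+1}}=\overline{b_p}$, i.e.\ $a_p=\overline{a_{q+1}}$; only the combination of the two relations gives the contradiction. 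This two-collision bookkeeping is precisely the missing content of your proposal.

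A smaller but real issue is your wrap-around convention: since $F(\inv(\vec x))=\vec x$, one has $f(G_n(\vec x))=f(\inv(\vec x))=a_1$, not the ``cyclic'' value $\overline{a_1}$ your formula gives at $\ell=n$. The paper handles this by declaring $a_{n+1}:=\overline{a_1}$ so that $f(G_\ell(\vec x))=\overline{a_{\ell+1}}$ holds uniformly; this matters exactly in the case $q=n$, which is also the only case where your $\inv(\vec y)$-comparison would produce any collision. As written, the proposal cannot be completed along the lines indicated; it needs the two forward-chain collisions and the combined contradiction described above.
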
 
\begin{proof} 
Suppose, by way of contradiction, that $\vec x$ and $\vec y$ are both in $2$-cycles of $\Gamma([K_n,f,\text{id}])$. Let $F=[K_n,f,\text{id}]$. Let $\vec x=(a_1,a_2,\ldots,a_n)$ and $\vec y=(b_1,b_2,\ldots,b_n)$. Because $\vec x\neq\vec y$, we may let $r$ be the smallest element of $\{1,\ldots,n\}$ such that $a_r\neq b_r$. Similarly, we may let $s$ be the largest element of $\{1,\ldots,n\}$ such that $a_s\neq b_s$. Note that $r\geq 2$ because $a_1=b_1=0$. Because $\vec x$ and $\vec y$ are not adjacent in $\widehat H_n$, the vector $\vec x+\vec y=(a_1+b_1,\ldots,a_n+b_n)$ does not contain the subsequence $101$. This implies that \[a_i+b_i=\begin{cases} 1, & \mbox{if } r\leq i\leq s; \\ 0, & \mbox{otherwise}. \end{cases}\] In other words, $\vec y=(a_1,\ldots,a_{r-1},\overline{a_r},\ldots,\overline{a_s},a_{s+1},\ldots,a_n)$. 
For the sake of convenience, let $a_{n+1}=\overline{a_1}$. 

Because we are assuming that each of the vectors $\vec x$ and $\vec y$ is in a $2$-cycle of $\Gamma(F)$, we know from Lemma \ref{Lem8} that $F(\vec x)=\inv(\vec x)$ and $F(\vec y)=\inv(\vec y)$. Hence, for each $\ell\in\{1,\ldots,n\}$, \[G_\ell(\vec x)=(\overline{a_1},\ldots,\overline{a_\ell},a_{\ell+1},\ldots,a_n)\] and \[G_\ell(\vec y)=(\overline{b_1},\ldots,\overline{b_\ell},b_{\ell+1},\ldots,b_n).\] It follows that \[f(\overline{a_1},\ldots,\overline{a_s},a_{s+1}\ldots,a_n)=f(G_s(\vec x))=\overline{a_{s+1}},\] but also \[f(\overline{a_1},\ldots,\overline{a_s},a_{s+1}\ldots,a_n)=f(\overline{b_1},\ldots,\overline{b_{r-1}},b_r,\ldots,b_n)=f(G_{r-1}(\vec y))=\overline{b_r}=a_r.\] Therefore, $a_r=\overline{a_{s+1}}$. Similarly, we have \[f(\overline{a_1},\ldots,\overline{a_{r-1}},a_r\ldots,a_n)=f(G_{r-1}(\vec x))=\overline{a_r}\] and \[f(\overline{a_1},\ldots,\overline{a_{r-1}},a_r\ldots,a_n)=f(\overline{b_1},\ldots,\overline{b_s},b_{s+1},\ldots,b_n)=f(G_s(\vec y))=\overline{b_{s+1}}=\overline{a_{s+1}}.\] This implies that $\overline{a_r}=\overline{a_{s+1}}=a_r$, which is a contradiction. 
\end{proof} 

We are now in a position to prove one of our crucial theorems. Let $\omega(G)$ denote the clique number of a graph $G$. That is, $G$ contains a clique of order $\omega(G)$ but does not contains a clique of order $\omega(G)+1$. 
\begin{theorem} \label{Thm1}
For any integer $n\geq 2$, we have \[\eta_n=\omega(\widehat H_n).\] 
\end{theorem}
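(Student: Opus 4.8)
The plan is to prove the two inequalities $\eta_n \ge \omega(\widehat H_n)$ and $\eta_n \le \omega(\widehat H_n)$ separately, using Lemma~\ref{Lem2} for the first and Lemmas~\ref{Lem8} and~\ref{Lem3} for the second.

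For the lower bound, I would let $C$ be a maximum clique of $\widehat H_n$, so that $|C| = \omega(\widehat H_n)$. Lemma~\ref{Lem2} produces a function $f\colon \mathbb F_2^n \to \mathbb F_2$ such that every vector of $C$ lies in a $2$-cycle of $\Gamma([K_n,f,\text{id}])$ and no two distinct vectors of $C$ lie in the same $2$-cycle. Hence $\Gamma([K_n,f,\text{id}])$ contains at least $|C|$ distinct $2$-cycles, which gives $\eta(f,\text{id}) \ge \omega(\widehat H_n)$ and therefore $\eta_n \ge \omega(\widehat H_n)$.

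For the upper bound, fix an arbitrary $g \in (\mathbb F_2)^{\mathbb F_2^n}$ and set $F = [K_n,g,\text{id}]$. By Lemma~\ref{Lem8}, every $2$-cycle of $\Gamma(F)$ has the form $\{\vec x, \inv(\vec x)\}$; since $\vec x$ and $\inv(\vec x)$ disagree in their first coordinate, each $2$-cycle contains exactly one vector belonging to $\widehat{\mathbb F}_2^n$. Let $S$ be the set of these representatives, i.e., the set of vectors in $\widehat{\mathbb F}_2^n$ that lie in a $2$-cycle of $\Gamma(F)$; the observation just made shows that $|S| = \eta(g,\text{id})$. Now Lemma~\ref{Lem3} tells us that no two distinct nonadjacent vertices of $\widehat H_n$ can both lie in $2$-cycles of $\Gamma(F)$, so any two distinct elements of $S$ must be adjacent in $\widehat H_n$; that is, $S$ is a clique of $\widehat H_n$. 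Therefore $\eta(g,\text{id}) = |S| \le \omega(\widehat H_n)$, and taking the maximum over all $g$ yields $\eta_n \le \omega(\widehat H_n)$. Combining the two bounds gives $\eta_n = \omega(\widehat H_n)$.

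Once the three lemmas are available, this is essentially a bookkeeping argument; the only point needing a little care is the remark—supplied by Lemma~\ref{Lem8}—that sending each $2$-cycle to its unique representative in $\widehat{\mathbb F}_2^n$ is a well-defined bijection onto $S$, which is precisely what allows us to identify the number of $2$-cycles with $|S|$ and thereby translate between the dynamical and graph-theoretic formulations. I do not anticipate a genuine obstacle in carrying this out.
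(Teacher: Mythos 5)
Your proposal is correct and follows essentially the same approach as the paper: the lower bound comes from applying Lemma~\ref{Lem2} to a maximum clique, and the upper bound from using Lemma~\ref{Lem8} to pick the unique representative of each $2$-cycle in $\widehat{\mathbb F}_2^n$ and Lemma~\ref{Lem3} to show those representatives form a clique. The only cosmetic difference is that you bound $\eta(g,\text{id})$ for an arbitrary $g$ and then take the maximum, while the paper works directly with a $g$ attaining $\eta_n$; the arguments are identical in substance.
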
 
\begin{proof} 
Choose an integer $n\geq 2$, and let $k=\omega(\widehat H_n)$. Let $C=\{\vec x_1,\vec x_2,\ldots,\vec x_k\}$ be a clique of order $k$ of $\widehat H_n$. By Lemma \ref{Lem2}, there exists a map $f\colon \mathbb F_2^n\rightarrow \mathbb F_2$ such that any distinct vectors $\vec x_i,\vec x_j\in C$ are in distinct $2$-cycles of $\Gamma([K_n,f,\text{id}])$. In particular, $\Gamma([K_n,f,\text{id}])$ contains at least $k$ $2$-cycles. In the notation of Definition \ref{Def3}, $f\in (\mathbb F_2)^{\mathbb F_2^n}$, and $\eta(f,\text{id})\geq k$. Thus, \[\eta_n=\displaystyle{\max_{g\in (\mathbb F_2)^{\mathbb F_2^n}}\eta(g,\text{id})}\geq\eta(f,\text{id})\geq k.\]

We now show that $\eta_n\leq k$. By Definition \ref{Def3}, there exists a function $g\in (\mathbb F_2)^{\mathbb F_2^n}$ such that $\eta(g,\text{id})=\eta_n$. In other words, there are $\eta_n$ $2$-cycles in the phase space $\Gamma([K_n,g,\text{id}])$. It follows from Lemma \ref{Lem8} that each $2$-cycle of $\Gamma([K_n,g,\text{id}])$ contains exactly one vector whose first coordinate is $0$. In the notation of Definition \ref{Def2}, each of the $\eta_n$ $2$-cycles of $\Gamma([K_n,g,\text{id}])$ contains exactly one vector that is a vertex of $\widehat H_n$. Let $\vec x_1,\vec x_2,\ldots,\vec x_{\eta_n}$ be these vertices of $\widehat H_n$. Choose some distinct $i,j\in\{1,2,\ldots,\eta_n\}$. Because each of the vectors $\vec x_i,\vec x_j$ is in a $2$-cycle of $\Gamma([K_n,g,\text{id}])$, it follows from Lemma \ref{Lem3} that $\vec x_i$ and $\vec x_j$ must be adjacent in $\widehat H_n$. Because $i$ and $j$ were arbitrary, this shows that $\{\vec x_1,\vec x_2,\ldots,\vec x_{\eta_n}\}$ is a clique of $\widehat H_n$. Consequently, $\eta_n\leq k$.
\end{proof}

Given an integer $n\geq 2$, we may define a map $\theta_n\colon\widehat{\mathbb F}_2^n\rightarrow\mathbb F_2^{n-1}$ by \[\theta_n(x_1,x_2,\ldots,x_n)=(x_2,x_3,\ldots,x_n).\] Because the first coordinate of each vector in $\widehat{\mathbb F}_2^n$ is $0$, it should be clear that $\theta_n$ is a vector space isomorphism. Furthermore, if $\vec x,\vec y\in\widehat{\mathbb F}_2^n$, then $\vec x+\vec y\in D_n(101)$ if and only if $\theta_n(\vec x)+\theta_n(\vec y)\in D_{n-1}(101)$. This motivates the following definition. 
\begin{definition} \label{Def4} 
Given an integer $m\geq 2$, define $H_m$ to be the undirected simple graph with vertex set \[V(H_m)=\mathbb F_2^m\] and edge set \[E(H_m)=\{\{\vec x,\vec y\}\subseteq\mathbb F_2^m\colon\vec x+\vec y\in D_m(101)\}.\]
\end{definition}
It follows from the preceding paragraph that $\theta_{n+1}$ is a graph isomorphism from the graph $\widehat H_{n+1}$ to the graph $H_n$. Therefore, $\eta_{n+1}=\omega(\widehat H_{n+1})=\omega(H_n)$. In the following section, we relate the graphs $H_n$ to error-correcting binary codes. 
\section{Binary Codes}
In coding theory, a binary code of length $n$ is a subset of $\mathbb F_2^n$. The elements of a code are known as \emph{codewords}. The \emph{Hamming distance} between two codewords $\vec x=(x_1,x_2,\ldots,x_n)$ and $\vec y=(y_1,y_2,\ldots,y_n)$, which we shall denote by $\delta(\vec x,\vec y)$, is simply the number of positions in which the vectors $\vec x$ and $\vec y$ have different coordinates. That is, \[\delta(\vec x,\vec y)=\vert\{i\in\{1,2,\ldots,n\}\colon x_i\neq y_i\}\vert.\] If $C$ is a nonempty binary code of length $n$, then the \emph{minimum distance} of $C$, denoted $\Delta(C)$, is the quantity \[\Delta(C)=\min_{\substack{\vec x,\vec y\in C \\ \vec x\neq \vec y}}\delta(\vec x,\vec y).\]  We make the convention that $\Delta(C)=\infty$ if the code $C$ consists of a single codeword.  

Of particular interest in coding theory are binary codes with minimum distance at least $3$. Such codes are known as one-bit error-correcting codes because any error formed by flipping a single bit (that is, changing a single coordinate) in a codeword can be detected and corrected (this is not the case for binary codes with minimum distance less than $3$). Let $A(n,3)$ denote the maximum number of codewords that can appear in a binary code of length $n$ that has minimum distance at least $3$. An important problem in coding theory is the determination of the values of $A(n,3)$. It is known that $A(n,3)=2^{n-\log_2(n+1)}$ if $n+1$ is a power of $2$, but most values of $A(n,3)$ are not known.

Given an integer $m\geq 2$, define $J_m$ to be the undirected simple graph with vertex set \[V(J_m)=\mathbb F_2^m\] and edge set \[E(J_m)=\{\{\vec x,\vec y\}\subseteq\mathbb F_2^m\colon\vec x+\vec y\in D_m(111)\}.\] Observe that two vectors $\vec x,\vec y\in\mathbb F_2^m$ are adjacent in $J_m$ if and only if the Hamming distance $\delta(\vec x,\vec y)$ between $\vec x$ and $\vec y$ is at least $3$. Therefore, $A(n,3)$ is equal to the clique number $\omega(J_n)$ of the graph $J_n$. From this, we obtain the following theorem. 
\begin{theorem} \label{Thm2} 
For any positive integer $n$, \[\eta_{n+1}=A(n,3).\]
\end{theorem}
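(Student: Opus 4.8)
The plan is to combine the structural results already in hand. By the remarks following Theorem~\ref{Thm1}, $\theta_{n+1}$ is a graph isomorphism, so $\eta_{n+1}=\omega(\widehat H_{n+1})=\omega(H_n)$; and by the discussion preceding the statement, $A(n,3)=\omega(J_n)$. Hence, for $n\geq 2$, it suffices to prove that $\omega(H_n)=\omega(J_n)$ (the case $n=1$ is checked directly: $\eta_2=1=A(1,3)$). Since the graphs $H_n$ and $J_n$ share the vertex set $\mathbb F_2^n$, the most economical route is to produce an explicit graph isomorphism between them.

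First I would introduce the linear ``partial sum'' map $P\colon\mathbb F_2^n\to\mathbb F_2^n$ defined by $P(x_1,x_2,\ldots,x_n)=(x_1,\,x_1+x_2,\,\ldots,\,x_1+x_2+\cdots+x_n)$. This is a linear automorphism of $\mathbb F_2^n$: its inverse sends $(y_1,y_2,\ldots,y_n)$ to $(y_1,\,y_1+y_2,\,y_2+y_3,\,\ldots,\,y_{n-1}+y_n)$. Because $P$ is linear, $P(\vec x)+P(\vec y)=P(\vec x+\vec y)$ for all $\vec x,\vec y\in\mathbb F_2^n$. So $P$ will be the desired isomorphism once I establish the equivalence
\[\vec w\in D_n(111)\iff P(\vec w)\in D_n(101)\qquad\text{for all }\vec w\in\mathbb F_2^n.\]
Granting this, $\{\vec x,\vec y\}$ is an edge of $J_n$ iff $\vec x+\vec y\in D_n(111)$ iff $P(\vec x)+P(\vec y)=P(\vec x+\vec y)\in D_n(101)$ iff $\{P(\vec x),P(\vec y)\}$ is an edge of $H_n$; since $P$ is a bijection on vertices, it is a graph isomorphism from $J_n$ to $H_n$, whence $\omega(J_n)=\omega(H_n)$ and the theorem follows.

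The substantive step is the displayed equivalence, which I would prove by writing $P(\vec w)$ out explicitly. If the coordinates of $\vec w$ equal to $1$ occur at positions $i_1<i_2<\cdots<i_k$, then the $j$-th coordinate of $P(\vec w)$ is the parity of $|\{t:i_t\leq j\}|$; reading $P(\vec w)$ from left to right one therefore sees a block of $0$'s of length $i_1-1$, then a block of $1$'s, then a block of $0$'s, then a block of $1$'s, and so on, with the block boundaries occurring exactly at $i_1,\ldots,i_k$. A key elementary observation is that a binary word avoids the scattered subword $101$ precisely when it has the form $0^a1^b0^c$; inspecting the block structure just described shows $P(\vec w)$ has this form exactly when $k\leq 2$. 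Hence $P(\vec w)\in D_n(101)$ iff $k\geq 3$, i.e.\ iff $\vec w$ has at least three coordinates equal to $1$, i.e.\ iff $\vec w\in D_n(111)$. I expect this case analysis on the block structure of $P(\vec w)$, together with the characterization of $101$-avoiding binary words, to be the only point requiring genuine care; everything else is formal manipulation of the results established above.
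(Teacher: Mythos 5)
Your proposal is correct and follows essentially the same route as the paper: reduce to $\omega(H_n)=\omega(J_n)$ and exhibit the partial-sum linear automorphism (the paper's $T$, your $P$) as a graph isomorphism from $J_n$ to $H_n$. Your block-structure argument correctly supplies the detail behind the equivalence $\vec w\in D_n(111)\iff T(\vec w)\in D_n(101)$, which the paper asserts without proof, and your separate check of the small case $n=1$ is a harmless extra precaution.
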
 
\begin{proof} 
We saw at the end of the preceding section that $\eta_{n+1}=\omega(H_n)$. Therefore, in light of the preceding paragraph, we see that it suffices to show that $\omega(H_n)=\omega(J_n)$. Consider the linear transformation $T\colon\mathbb F_2^n\to\mathbb F_2^n$ given by \[T(x_1,x_2,\ldots,x_n)=(x_1,x_1+x_2,x_1+x_2+x_3,\ldots,x_1+x_2+\cdots+x_n).\] The linear tranformation $T$ is an endomorphism with a trivial kernel, so it is an isomorphism. Furthermore, a vector $\vec z\in\mathbb F_2^n$ contains the subsequence $111$ if and only if $T(\vec z)$ contains the subsequence $101$. This shows that $T$ is in fact a graph isomorphism from $J_n$ to $H_n$, so $\omega(H_n)=\omega(J_n)$. 
\end{proof}

As mentioned in the introduction, Theorem \ref{Thm2} provides the first known interpretation of the numbers $A(n,3)$ outside of coding theory.

\section{Concluding Remarks} 
We have defined $\eta_n$ to be the maximum number of $2$-cycles that can appear in a phase space of the form $\Gamma([K_n,f,\text{id}])$. One could easily generalize these numbers by defining $\eta_n(m)$ to be the maximum number of $m$-cycles that can appear in such a phase space. One may show that $\eta_n(1)=2$ for all positive integers $n$ (the only possible fixed points of an SDS map $[K_n,f,\text{id}]$ are the all-$0$'s and all-$1$'s vectors of length $n$). Is it possible to obtain general bounds for the numbers $\eta_n(m)$? Could we perhaps relate the numbers $\eta_n(m)$ to codes as we have done for the numbers $\eta_n(2)$? 

There are, of course, many other natural ways to generalize the problems considered here. One might wish to replace complete graphs with graphs that are, in some sense, ``almost" complete (such as complements of cycle graphs). We could also choose to ask similar questions about SDS maps of the form $[K_n,f,\text{id}]$ defined using a set of states $A$ with $\vert A\vert\geq 3$. 
\section{Acknowledgments}
The author would like to thank Padraic Bartlett for introducing the author to sequential dynamical systems and giving excellent suggestions for the improvement of this paper. The author would also like to thank the anonymous DMTCS referees for several helpful suggestions.


\begin{thebibliography}{9} 

\bibitem{Aledo}
J. A. Aledo, L. G. Diaz, S. Martinez, and J. C. Valverde. On periods and equilibria of computational sequential systems. \emph{Information Sciences}, Volumes 409--410 (2017), 27--34.

\bibitem{Barrett8}
C. L. Barrett, W. Y. C. Chen, and M. J. Zheng. Discrete dynamical systems on graphs and Boolean functions. \emph{Math. Comput. Simul.}, 66 (2004) 487–-497.

\bibitem{Barrett4}
C. L. Barrett, H. B. Hunt III, M. V. Marathe, S. S. Ravi, D. J. Rosenkrantz, and R. E. Stearns. Reachability problems for sequential dynamical systems with threshold functions. \emph{Theoret. Comput. Sci.}, 295 (2003), 41--64. 

\bibitem{Barrett5}
C. L. Barrett, H. B. Hunt III, M. V. Marathe, S. S. Ravi, D. J. Rosenkrantz, R. E. Stearns, and P. T. Tosic. Gardens of Eden and fixed points in sequential dynamical systems. DM--CCG, (2001), 95--110. 

\bibitem{Barrett6}
C. L. Barrett, H. B. Hunt III, M. V. Marathe, S. S. Ravi, D. J. Rosenkrantz, and R. E. Stearns. On some special classes of sequential dynamical systems. \emph{Ann. Comb.}, 7 (2003), 381--408. 

\bibitem{Barrett2}
C. L. Barrett, H. S. Mortveit, and C. M. Reidys. Elements of a theory of computer simulation II: sequential dynamical systems. \emph{Appl. Math. Comput.}, 107 (2000), 121--136. 

\bibitem{Barrett3}
C. L. Barrett, H. S. Mortveit, and C. M. Reidys. Elements of a theory of computer simulation III: equivalence of SDS. \emph{Appl. Math. Comput.}, 122 (2001), 325--340. 

\bibitem{Barrett7}
C. L. Barrett, H. S. Mortveit, and C. M. Reidys. Elements of a theory of computer simulation IV: sequential dynamical systems: fixed points, invertibility and equivalence. \emph{Appl. Math. Comput.}, 134 (2003), 153–-171.

\bibitem{Barrett1}
C. L. Barrett and C. M. Reidys. Elements of a theory of computer simulation I: Sequential CA over random graphs. \emph{Appl. Math. Comput.}, 98 (1999), 241--259.

\bibitem{Collina}
E. Collina and A. D'Andrea. A graph-dynamical interpretation of Kiselman's semigroups. \emph{J. Algebraic Combin.}, 41 (2015), 1115--1132.

\bibitem{Garcia1}
L. D. Garcia, A. S. Jarrah, and R. Laubenbacher. Sequential dynamical systems over words. \emph{Appl. Math. Comput.}, 174 (2006), 500--510.

\bibitem{Macauley}
M. Macauley, J. McCammond, and H. S. Mortveit. Dynamics groups of asynchronous cellular automata. \emph{J. Algebraic Combin.}, 33 (2011), 11--35. 

\bibitem{Macauley2}
M. Macauley and H. S. Mortveit. Cycle Equivalence of Graph Dynamical Systems. \emph{Nonlinearity}, 22 (2009), 421-436.

\bibitem{MacWilliams}
F. J. MacWilliams and N. J. A. Sloane, \emph{The theory of error-correcting codes.} Elsevier-North Holland, 1978, p. 674.

\bibitem{Milshtein}
M. Milshtein, A new binary code of length 16 and minimum distance 3, \emph{Information Processing Letters}, 115.12 (2015), 975--976. 

\bibitem{Mortveit}
H. S. Mortveit and C. M. Reidys. \emph{An introduction to sequential dynamical systems.} Springer Science \& Business Media, 2007.

\bibitem{OEIS}
The On-Line Encyclopedia of Integer Sequences, published electronically at https://oeis.org.

\bibitem{Ostergard}
P. R. J. Ostergard , T. Baicheva, and E. Kolev, Optimal binary one-error-correcting codes of length 10 have 72 codewords, \emph{IEEE Trans. Inform. Theory}, 45 (1999), 1229--1231.

\bibitem{Reidys1}
C. M. Reidys. On acyclic orientations and sequential dynamical systems. \emph{Adv. Appl. Math.}, 27 (2001), 790--804.

\bibitem{Romanov}
A. M. Romanov. New binary codes of minimal distance 3, \emph{Problemy Peredachi Informatsii}, 19 (1983), 101--102.

\bibitem{Tosic}
P. T. Tosic and G.U. Agha. On computational complexity of counting fixed points in symmetric Boolean graph automata. \emph{Lect. Notes Comput. Sci.} 3699 (2005) 191--205.

\bibitem{Veliz-Cuba}
A. Veliz-Cuba and R. Laubenbacher. On computation of fixed points in Boolean networks, \emph{J. Appl. Math. Comput.} 39 (2012) 145--153.

\bibitem{Wu}
S. Wu, A. Adig, and H.S. Mortveit. Limit cycle structure for dynamic bi-threshold systems. \emph{Theor. Comput. Sci.} 559 (2014) 34--41.

\end{thebibliography}
\end{document}